\newtheorem{theorem}{Theorem}[section]
\newtheorem{claim}[theorem]{Claim}
\newtheorem{lemma}[theorem]{Lemma}
\def\R{\mathbb{R}}
\def\mod{\rm{mod}}
\newcommand{\eps}{\varepsilon}
\renewcommand{\epsilon}{\varepsilon}
\begin{document}

\title{\bf Entropy-based Bounds on Dimension Reduction in~$L_1$}

\author{
 Oded Regev \footnote{Blavatnik School of Computer Science, Tel Aviv University, and
CNRS, D{\'e}partement d'Informatique, {\'E}cole normale sup{\'e}rieure, Paris.
Supported by the Israel Science Foundation and by a European Research Council (ERC) Starting Grant.
   }}
%   {\tt odedr@cs.tau.ac.il}. } }

%\date{}

\maketitle

\begin{abstract}
We show that for every large enough integer $N$, there exists an $N$-point subset of $L_1$ such that for every $D>1$,
embedding it into $\ell_1^d$ with distortion $D$ requires dimension $d$ at least $N^{\Omega(1/D^2)}$,
and that for every $\eps>0$ and large enough integer $N$, there exists
an $N$-point subset of $L_1$ such that embedding it into $\ell_1^d$ with distortion $1+\eps$ requires
dimension $d$ at least $N^{1-O(1/\log(1/\eps))}$.
These results were previously proven by Brinkman and Charikar [JACM, 2005]
and by Andoni, Charikar, Neiman, and Nguyen [FOCS 2011].
We provide an alternative and arguably more intuitive proof based on an entropy argument.
\end{abstract}

\section{Introduction}

We prove the following theorem.

\begin{theorem}\label{thm:mainthm}
For every large enough integer $N$, there exists an $N$-point subset of $L_1$ such that for every $D>1$,
embedding it into $\ell_1^d$ with distortion $D$ requires dimension $d$ at least $N^{\Omega(1/D^2)}$.
Moreover, for every $\eps>0$ and large enough integer $N$, there exists
an $N$-point subset of $L_1$ such that embedding it into $\ell_1^d$ with distortion $1+\eps$ requires
dimension $d$ at least $N^{1-O(1/\log(1/\eps))}$.
\end{theorem}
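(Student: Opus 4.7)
The plan is to exhibit an $N$-point subset of $L_1$ whose ``information content'' is too rich to be encoded by any low-dimensional $\ell_1^d$ embedding. The strategy has two pillars: (i) an \emph{upper bound} on how much information an embedding into $\ell_1^d$ can carry, obtained by discretizing the coordinates; and (ii) a \emph{lower bound} on the information that any faithful embedding of a suitable hard instance must encode.

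For (i), I would argue that any distortion-$D$ embedding $f\colon X\to\ell_1^d$ can be normalized to take values in $[0,\poly(N)]^d$ and then rounded to polynomial precision without inflating the distortion by more than a constant factor. Writing $f=(f_1,\ldots,f_d)$, each $f_i$ is determined (up to translation) by a permutation of the $N$ points together with a vector of $N$ nonnegative ``gap widths'', so the number of distinct rounded embeddings is at most $\exp(O(dN\log N))$. Consequently the Shannon entropy of $f$, under any distribution over valid embeddings, is bounded by $O(dN\log N)$.

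For (ii), I would take $X$ to be the vertex set of a Laakso-style recursive diamond graph, embedded isometrically in $L_1$ via its cut metric and decorated with an independent uniform binary label at each internal node. The key step is a short-diagonals / Poincar\'e-type estimate for $L_1$: for each diamond in the construction, a distortion-$D$ embedding must reveal $\Omega(1/D^2)$ bits of mutual information with the label of that diamond (and $\Omega(1/\log(1/\eps))$ bits when $D=1+\eps$). Iterating this estimate over the $\Theta(\log N)$ recursion levels and the exponentially many diamonds per level, and using independence of the labels to add the per-diamond contributions, yields a lower bound on the entropy of $f$ that, when compared against the bound from (i), forces $d\ge N^{\Omega(1/D^2)}$; the sharper per-level estimate yields the $1+\eps$ regime.

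The main obstacle will be the quantitative per-diamond mutual-information estimate. In Hilbert space one has a clean Pythagoras-style identity that makes the corresponding $\ell_2$ lower bound immediate; in $L_1$ one must replace this by a genuinely combinatorial argument quantifying the failure of the parallelogram law on a single diamond, and one must then condition carefully so that diamonds at earlier and later levels contribute \emph{independent} information despite sharing vertices. Once this Poincar\'e-type inequality is in place with the correct $1/D^2$ and $1/\log(1/\eps)$ dependences, the rest of the proof is additivity of entropy and a union bound over the construction.
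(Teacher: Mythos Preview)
Your proposal has a genuine quantitative gap in step (i), and a conceptual gap in step (ii), and together they prevent the argument from yielding anything close to $d\ge N^{\Omega(1/D^2)}$.

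\textbf{The arithmetic of (i) versus (ii) does not close.} Your upper bound on the entropy of the (discretized) embedding is $O(dN\log N)$. On the other side, the total number of diamonds in a recursive diamond graph on $N$ vertices is $\Theta(N)$, so even if every diamond contributed an independent $\Omega(1/D^2)$ bits, the information lower bound is only $O(N/D^2)$. Comparing the two gives $dN\log N \gtrsim N/D^2$, i.e.\ $d \gtrsim 1/(D^2\log N)$, which is vacuous. To reach $d\ge N^{\Omega(1/D^2)}$ you would need an entropy upper bound of order $\log d$, not $dN\log N$; a discretization/counting argument on the whole embedding cannot produce that scale.

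\textbf{The random labels in (ii) are not tied to the embedding.} If the binary labels you attach to internal nodes do not change the metric (and in your description they do not --- the cut metric of the Laakso graph is invariant under flipping the two sides of any diamond), then any embedding $f$ of the fixed metric is independent of the labels, so $I(f:\text{labels})=0$. The ``embedding reveals the label of each diamond'' intuition therefore has no content as stated. One needs a random object that the embedding is genuinely forced to encode.

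\textbf{What the paper actually does.} The point set is fixed; nothing about it is random. Given a distortion-$D$ embedding $F$ into $\ell_1^d$, one associates to each edge $x\in[2k]^n$ the difference vector $f(x)=F(u)-F(v)\in\R^d$, which (after a simple sign/slack trick) may be taken to be a probability distribution on $[d]$. Now draw a uniformly random edge $X=(X_1,\ldots,X_n)$ and a single sample $M\in[d]$ from the distribution $f(X)$. This is the key move: the relevant entropy upper bound is
\[
I(X:M)\;\le\;H(M)\;\le\;\log d,
\]
not the entropy of the whole map $f$. The chain rule gives
\[
\log d \;\ge\; \sum_{\ell=1}^{n} I(X_\ell : M \mid X_1,\ldots,X_{\ell-1}),
\]
and the metric constraint at each level (the antipodal distance on the level-$\ell$ cycle) together with Fano's inequality forces each summand to be at least $\log k - \delta\log(k-1)-H(\delta)$, which for $k=2$ is $1-H(\tfrac12-\tfrac1{2D})=\Omega(1/D^2)$. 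Since $n=\Theta(\log N)$, one gets $\log d \ge \Omega((\log N)/D^2)$, i.e.\ $d\ge N^{\Omega(1/D^2)}$; choosing $k\approx 1/(\eps\log(1/\eps))$ gives the $1+\eps$ bound.

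In short, the entropy argument is not ``how many embeddings are there'' but ``how much information does one coordinate sample of the embedding carry about a random edge''. The per-diamond Poincar\'e-type intuition you describe is morally right, but it has to be packaged so that the upper bound is $\log d$, which is what the random-edge / random-coordinate encoding achieves.
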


Both parts of Theorem~\ref{thm:mainthm} were previously known.
The first part (embedding with large distortion)
was first shown by Brinkman and Charikar~\cite{BrinkmanC05},
and later with a simpler proof by Lee and Naor~\cite{LeeNaor04}.
The second part (embedding with low distortion) was recently shown by
Andoni, Charikar, Neiman, and Nguyen~\cite{AndoniCNN10}.
Our proof is based on an entropy argument, and is arguably more intuitive.

The set of points we use is identical to the one used by Andoni et al.~\cite{AndoniCNN10}. For completeness,
we briefly describe it here (see also Figure~\ref{fig:graph} for an illustration).
For integers $k \ge 2$, $n \ge 1$, we define the so-called ``recursive cycle" graph $G_{k,n}$, and
associate with each vertex a label in $\{0,1\}^{k^n}$. The set of all labels will be our point set
$P_{k,n}$ in $\ell_1$. First, for $k \ge 2$, let $G_{k,1}$ be the cycle of length $2k$, with two
distinguished antipodal vertices (i.e., of distance $k$), call them ``left" and ``right".
For $0\le i \le k$, the $i$th vertex on the top path from the left to the right vertex is labeled
with the vector $(0,\ldots,0,1,\ldots,1)$ with $k-i$ zeros and $i$ ones, and the $i$th vertex on the
bottom path is associated with the vector $(1,\ldots,1,0,\ldots,0)$ with $i$ ones and $k-i$ zeros. Notice that the $\ell_1$
distance between the labels of any two adjacent vertices is $1$, whereas that between the labels of any two antipodal vertices is $k$.

For $n \ge 2$, define $G_{k,n}$ as the graph obtained from $G_{k,n-1}$ by replacing each edge with a copy of $G_{k,1}$
and identifying the distinguished vertices with the original endpoints of the edge.
%(We remark that the graphs $G_{2,n}$ are known as the diamond graphs.)
The number of vertices in $G_{k,n}$ is easily seen to be
$$N_{k,n}:=\frac{(2k-2)(2k)^{n}+2k}{2k-1} \le (2k)^n.$$
For the labels, we first take the labels in $G_{k,n-1}$ and duplicate each coordinate $k$ times. This defines the labels
for those vertices coming from $G_{k,n-1}$. For the newly added vertices on each cycle that replaced an edge
of $G_{k,n-1}$, we replace the $k$ coordinates on which the two distinguished nodes of that cycle differ with
the same labeling of $G_{k,1}$ described earlier. Notice the following two properties: the $\ell_1$ distance between
the labels of any two adjacent vertices is $1$, and for $1\le \ell\le n$, the distance
between any two antipodal vertices in level $\ell$ is $k^{n-\ell+1}$.
We remark that these two properties are also satisfied by the shortest path metric on $G_{k,n}$, but since that metric is not in $\ell_1$, it is not good enough for the purpose
of proving dimension reduction in $\ell_1$.

Finally, we label the edges of $G_{k,1}$ by elements of $[2k]$ starting from the left vertex and going along the cycle,
and extend this to a labeling of $G_{k,n}$ by elements of $[2k]^n$ in a recursive way, with
the coordinates labeling the location of the edge from the top layer to the bottom layer (see Figure~\ref{fig:graph}).

%It is known that $G_{k,n}$ embeds in $L_1$ with constant distortion~\cite{GuptaNRS04}.

\begin{figure}
\begin{centering}
\ifpdf
\includegraphics[width=0.8\textwidth]{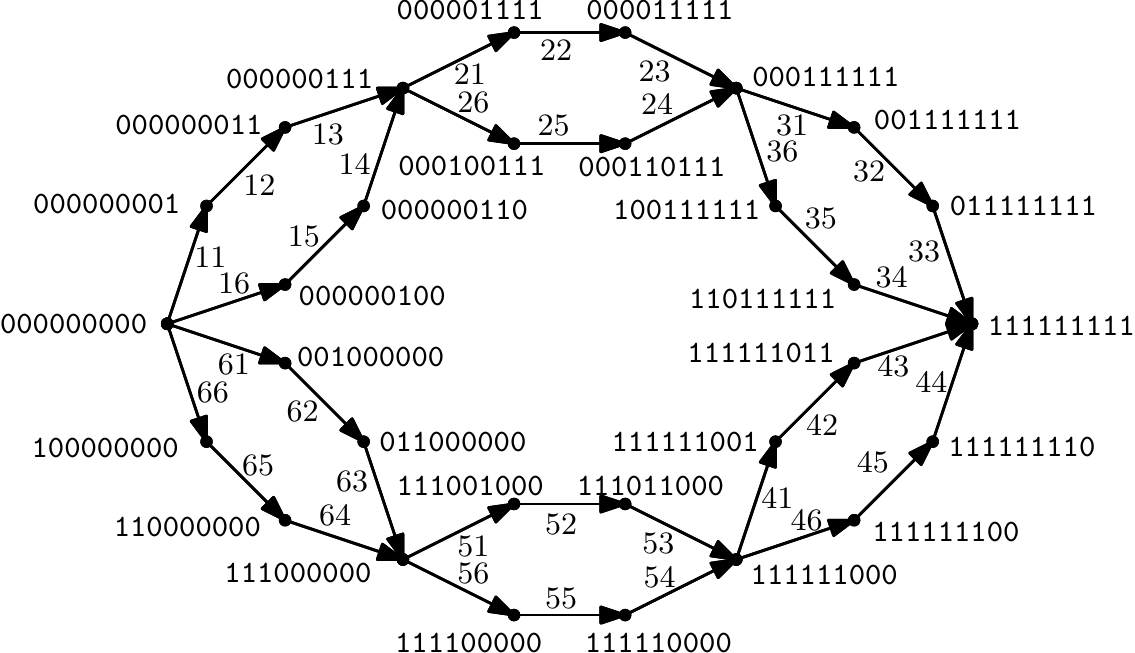}
\fi
\caption{$G_{3,2}$ with our labeling and orientation of the edges and the labels on vertices in $\{0,1\}^9$.}
\label{fig:graph}
\end{centering}
\end{figure}

The idea of the proof is the following. Given a low-distortion embedding of $P_{k,n}$ into $\ell_1^d$, we naturally obtain
a mapping that maps each edge of the graph $G_{k,n}$ to a $d$-dimensional vector (namely, the difference between the two embedded endpoints)
whose $\ell_1$ norm is close to $1$. Assume for simplicity that this norm is exactly $1$;
assume moreover that the vector has non-negative coordinates. (In the proof we will show
how to reduce the general case to this case.) So we can equivalently view this mapping as an encoding from $[2k]^n$ to probability
distributions over $[d]$. Using the second property mentioned above,
one can obtain the following crucial property of the encoding: For any $\ell \in [n]$
and any $x_1,\ldots,x_{\ell-1} \in [2k]$, if we are given $x_1,\ldots,x_{\ell-1}$
together with the encoding of $(x_1,\ldots,x_{n}) \in [2k]^n$, where $x_\ell,\ldots,x_n$ are chosen uniformly,
then we have a good probability to guess $x_\ell~{\mod}~k$ (perfect probability in case of no distortion). A basic information theoretic
argument now provides a lower bound on $d$ of any such encoding.
For instance, in the case there is no distortion, the encoding allows us to predict
$x_\ell~{\mod}~k$ as above with certainty, and the information theoretic argument gives
the tight bound $d \ge k^n$.
%, which matches the natural (deterministic) encoding that stores $(x_1,\ldots,x_n)~{\mod}~k$.
We note that this simple yet powerful information theoretic argument appears in various different contexts,
such as that of quantum random access codes~\cite{nayak:qfa}.

\section{Preliminaries}

All logarithms are base $2$. We use $[k]$ to denote the set $\{1,\ldots,k\}$.
We now list a few basic definitions and facts from information theory.
Although not really needed for our proof, the interested reader can find
an introduction to the area in~\cite{CoverT06}.
We let $H(\delta):=-\delta \log \delta - (1-\delta) \log(1-\delta)$ denote the \emph{binary entropy function}.
For a random variables $X$ on a domain $[d]$ obtaining each value $i \in [d]$ with probability $p_i$,
the \emph{entropy} of $X$ is given by $H(X):=-\sum_i p_i \log p_i$, and is always at most $\log d$.
For two random variables $X,Y$, the \emph{conditional entropy} $H(X~|~Y)$ is the expectation
of $H(X~|~Y=y)$ over $y$ chosen according to $Y$; this can be seen to equal $H(XY) - H(Y)$.
Finally, the \emph{mutual information} $I(X:Y)$ is defined as $H(X)+H(Y)-H(XY)=H(X)-H(X|Y)$, and the
\emph{conditional mutual information} $I(X:Y~|~Z)$ is the expectation of $I(X:Y~|~Z=z)$ over
$z$ chosen according to $Z$, or equivalently, $H(X~|~Z)+H(Y~|~Z)-H(XY~|~Z)$. The \emph{data processing inequality} says that applying
a function cannot increase mutual information, $I(f(X):Y) \le I(X:Y)$.

The following claim (which is essentially what is known as Fano's inequality) shows that if one random variable can be used to predict
another random variable, then their mutual information cannot be too small.
\begin{claim}\label{clm:fano}
Assume $X$ is a random variable uniformly distributed over $[k]$. Let $Y$ be another
random variable, and assume that there exists some function $f$ with range $[k]$ such that $f(Y)=X$
with probability at least $p \ge 1/2$. Then $I(X:Y) \ge \log k - (1-p) \log(k-1) - H(p)$.
\end{claim}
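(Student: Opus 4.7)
The plan is to reduce the statement to the textbook form of Fano's inequality. First I would invoke the data processing inequality: since $f(Y)$ is a function of $Y$, we have $I(X:Y) \ge I(X:f(Y))$, and because $X$ is uniform on $[k]$, $H(X) = \log k$, so it suffices to prove
\[
H(X \mid f(Y)) \le H(p) + (1-p)\log(k-1).
\]
After that, rewriting $I(X:f(Y)) = H(X) - H(X \mid f(Y))$ gives the claimed bound.

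To prove this upper bound, I would introduce the indicator random variable $E = \mathbf{1}[f(Y) \ne X]$, so that $\Pr(E=0) = p$ and hence $H(E) = H(p)$. The point is that $E$ is a deterministic function of the pair $(X, f(Y))$, so expanding the joint entropy $H(X,E \mid f(Y))$ in two ways gives
\[
H(X \mid f(Y)) = H(X, E \mid f(Y)) = H(E \mid f(Y)) + H(X \mid E, f(Y)).
\]
The first term is at most $H(E) = H(p)$ since conditioning cannot increase entropy. For the second term I would split on the value of $E$: when $E=0$, we have $X = f(Y)$ so the conditional entropy is $0$; when $E=1$, $X$ takes values in a set of size at most $k-1$, so the conditional entropy is at most $\log(k-1)$. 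Weighting by $\Pr(E=1) = 1-p$ yields $H(X \mid E, f(Y)) \le (1-p)\log(k-1)$, and adding the two pieces gives the desired bound.

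There is no real obstacle here; the only step that requires a moment's thought is the observation that one should replace $Y$ by $f(Y)$ at the outset (via data processing), since otherwise the quantity $H(X \mid Y)$ carries no a priori finite alphabet on which to run the Fano split, and the $\log(k-1)$ factor has no reason to appear. The hypothesis $p \ge 1/2$ is not used anywhere in the derivation itself; it only ensures that the resulting lower bound is nontrivial when one later applies the claim (since for $p < 1/2$ a guess is no better than random and the right-hand side can be negative, in which case the inequality is vacuous).
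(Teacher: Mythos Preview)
Your proof is correct and essentially identical to the paper's: both apply the data processing inequality to pass from $Y$ to $f(Y)$, introduce the error indicator, expand $H(X\mid f(Y))$ via the chain rule, and bound the two pieces by $H(p)$ and $(1-p)\log(k-1)$ respectively. One small remark: since the hypothesis is $\Pr(f(Y)=X)\ge p$ rather than $=p$, passing from the true success probability $q\ge p$ to the stated bound uses that $t\mapsto H(t)+(1-t)\log(k-1)$ is nonincreasing on $[1/2,1]$, so the assumption $p\ge 1/2$ is not entirely idle (the paper glosses over this step as well).
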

\begin{proof}
By the data processing inequality,
$$ I(X:Y) \ge I(X:f(Y)) = H(X) - H(X~|~f(Y)) = \log k - H(X~|~f(Y)),$$
so it suffices to bound $H(X~|~f(Y))$ from above.
Since conditioning cannot increase entropy,
\begin{align*}
H(X~|~f(Y)) &= H(1_{X=f(Y)},X~|~f(Y)) \\
&= H(1_{X=f(Y)}~|~f(Y)) + H(X ~|~ 1_{X=f(Y)},f(Y)) \\
&\le H(1_{X=f(Y)}) + H(X ~|~ 1_{X=f(Y)},f(Y)) \\
&\le H(p) + (1-p) \log(k-1).\qedhere
\end{align*}
\end{proof}

\section{Proof}

Our main technical theorem is the following.

\begin{theorem}\label{thm:main}
For any $k \ge 2$, $n \ge 1$ the following holds. Assume $f:[2k]^n \to \R^d$ satisfies
that for all $x_1,\ldots,x_n \in [2k]$, $\|f(x_1,\ldots,x_n)\|_1 \le 1$ and, moreover, that
for some $\eps < 1/(k-1)$, and for all $\ell \in [n]$, $x_1,\ldots,x_{\ell-1} \in [2k]$, and $r \in [k-1]$,
\begin{align}
  &\frac{1}{2k} \left\| \sum_{b=1}^r (f(x_1,\ldots,x_{\ell-1},b) + f(x_1,\ldots,x_{\ell-1},b+k)) - \right. \nonumber \\
    &\left. \qquad \qquad \sum_{b=r+1}^k (f(x_1,\ldots,x_{\ell-1},b) + f(x_1,\ldots,x_{\ell-1},b+k)) \right\|_1 \ge 1-\eps\label{eq:constr}
\end{align}
where $f(x_1,\ldots,x_\ell)$ denotes the average of $f(x_1,\ldots,x_n)$ over $x_{\ell+1},\ldots,x_n$ chosen
uniformly in $[2k]$. Then
\begin{align}\label{eq:lowerbound}
d \ge 2^{(\log k - \delta \log (k-1) - H(\delta))n - 1} - \frac{1}{2},
\end{align}
where $\delta := (k-1)\eps/2 < 1/2$.
\end{theorem}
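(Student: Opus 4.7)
The plan mirrors the sketch in the introduction: first convert $f$ into a probability-distribution-valued mapping on a slightly larger ground set, then use~\eqref{eq:constr} to produce a predictor of $x_\ell \bmod k$ from a sample of the distribution, then apply Claim~\ref{clm:fano} to convert that predictor into a lower bound on conditional mutual information, and finally use the chain rule for mutual information to turn $n$ per-coordinate information gains into the claimed bound on $d$.

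For the reduction, I would write $f(x) = f^+(x) - f^-(x)$ with disjoint nonnegative supports and set $\tilde f(x) := (f^+(x),\, f^-(x),\, 1 - \|f(x)\|_1) \in \R^{2d+1}$, so that each $\tilde f(x)$, and hence each average $\tilde f(x_1,\ldots,x_\ell)$, is a probability distribution on $[2d+1]$. Applying the inequality $\|u\|_1 + \|v\|_1 \ge \|u - v\|_1$ to the first $2d$ coordinates shows that the LHS of~\eqref{eq:constr} is only larger when $\tilde f$ replaces $f$, so the hypothesis continues to hold after the reduction.

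For the predictor, let $X = (X_1,\ldots,X_n)$ be uniform on $[2k]^n$, let $Z$ be a sample from the distribution $\tilde f(X)$, and set $Y_\ell := ((X_\ell - 1) \bmod k) + 1 \in [k]$. Fix a prefix $x_{<\ell} := (x_1,\ldots,x_{\ell-1})$ and a threshold $r \in [k-1]$. The distribution of $Z$ conditioned on $X_{<\ell} = x_{<\ell}$ and $Y_\ell \le r$ is
$P_0 := \frac{1}{2r} \sum_{b=1}^r \bigl(\tilde f(x_{<\ell}, b) + \tilde f(x_{<\ell}, b+k)\bigr)$,
with an analogous probability distribution $P_1$ for $Y_\ell > r$. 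The LHS of~\eqref{eq:constr} for $\tilde f$ is exactly $\|(r/k) P_0 - ((k-r)/k) P_1\|_1$, and for the binary hypothesis test with priors $(r/k, (k-r)/k)$ the optimal (MAP) success probability is $\tfrac{1}{2}\bigl(1 + \|(r/k) P_0 - ((k-r)/k) P_1\|_1\bigr) \ge 1 - \eps/2$. Since the $k-1$ indicators $\bigl(1_{Y_\ell > r}\bigr)_{r=1}^{k-1}$ uniquely determine $Y_\ell$, a union bound over $r$ yields a function $h_{x_{<\ell}} : [2d+1] \to [k]$ with $\Pr[h_{x_{<\ell}}(Z) = Y_\ell \mid X_{<\ell} = x_{<\ell}] \ge 1 - (k-1)\eps/2 = 1 - \delta$.

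Since $\delta < 1/2$ by the assumption $\eps < 1/(k-1)$, and $Y_\ell$ is uniform on $[k]$ independently of the prefix, Claim~\ref{clm:fano} applied to each conditional distribution gives $I(Y_\ell : Z \mid X_{<\ell} = x_{<\ell}) \ge \log k - \delta \log(k-1) - H(\delta)$, and averaging over $x_{<\ell}$ preserves the bound. Since $Y_\ell$ is a function of $X_\ell$, conditional data processing upgrades this to the same lower bound on $I(X_\ell : Z \mid X_{<\ell})$, and the chain rule then sums to $I(X : Z) \ge n\bigl(\log k - \delta \log(k-1) - H(\delta)\bigr)$; combined with $I(X : Z) \le H(Z) \le \log(2d + 1)$ and solved for $d$, this is exactly~\eqref{eq:lowerbound}. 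The main step requiring care is the calibration in the third paragraph: the $1/(2k)$ normalization in~\eqref{eq:constr} must be matched to the nonuniform prior $(r/k, (k-r)/k)$ so that the per-test success probability is as sharp as $1 - \eps/2$ and the union bound yields exactly $1-(k-1)\eps/2$; without this matching, the $\delta \log(k-1)$ term in~\eqref{eq:lowerbound} would be weakened by a constant factor.
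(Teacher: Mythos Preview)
Your proposal is correct and follows the same overall architecture as the paper: pad $f$ to a probability-distribution-valued map $\tilde f:[2k]^n\to\R^{2d+1}$, build a predictor of $X_\ell\bmod k$ from a single sample $Z\sim\tilde f(X)$, invoke Claim~\ref{clm:fano}, and sum the per-coordinate information via the chain rule to bound $\log(2d+1)$.

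The one genuine point of divergence is the predictor construction. The paper builds a \emph{single} $k$-ary maximum-likelihood estimator $j\mapsto\arg\max_a Q_a(j)$ (with $Q_a$ the distribution of $Z$ given $Y_\ell=a$) and analyzes it via an elementary inequality applied coordinate-wise: for nonnegative reals $p_1,\ldots,p_k$,
\[
\sum_i p_i-\max_i p_i \;\le\; \tfrac12\sum_{r=1}^{k-1}\Bigl(\sum_i p_i-\Bigl|\sum_{i\le r}p_i-\sum_{i>r}p_i\Bigr|\Bigr),
\]
which summed over coordinates gives $\tfrac1k\|\max\{Q_1,\ldots,Q_k\}\|_1\ge 1-(k-1)\eps/2$. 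You instead run $k-1$ binary MAP tests (one per threshold $r$, with the nonuniform prior $(r/k,(k-r)/k)$ so that the weighted total-variation expression matches~\eqref{eq:constr} exactly), each succeeding with probability $\ge 1-\eps/2$, and combine them by a union bound. Both routes land on precisely the same success probability $1-\delta$, so the final bound is identical. Your version trades the auxiliary combinatorial claim for an explicit union bound over thresholds; the paper's version gives a single closed-form estimator. Neither dominates the other, and your calibration remark in the last paragraph is exactly the point that makes the two match.
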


Before proving the theorem, let us explain how it implies Theorem~\ref{thm:mainthm}.
Consider any embedding $F$ of $P_{k,n}$ into $\ell_1^d$ with distortion at most $1/(1-\eps)$ for some $\eps < 1/(k-1)$.
By scaling $F$, we can assume that it is $1$-Lipschitz (i.e., it does not expand any distance) and that distances
are not contracted by more than $1-\eps$. Let $f$ be the function that maps each
$x \in [2k]^n$ to $F(u)-F(v)$, where $u$ is the label of the right endpoint of the edge labeled by $x$
and $v$ is the label of its left endpoint. Since $F$ is $1$-Lipschitz, $\|f(x)\|_1 \le 1$ for all
$x \in [2k]^n$. Moreover, it is not difficult to see that $f$ satisfies Eq.~\eqref{eq:constr} (see Figure~\ref{fig:graph2}).
Hence, Theorem~\ref{thm:main} implies that the bound in Eq.~\eqref{eq:lowerbound} holds.

\begin{figure}
\begin{centering}
\ifpdf
\includegraphics[width=0.3\textwidth]{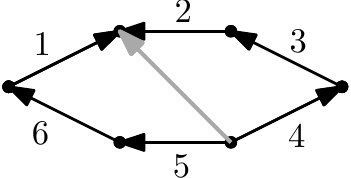}
\fi
\caption{The condition in Eq.~\eqref{eq:constr} for $r=1$, $k=3$.}
\label{fig:graph2}
\end{centering}
\end{figure}

For the first part of Theorem~\ref{thm:mainthm} we fix $k=2$. We obtain that for any $D \ge 1$,
any distortion-$D$ embedding of $G_{2,n}$ (so $\eps=1-1/D$ and $\delta=1/2-1/(2D)$) must have dimension at least
$$2^{(1 - H(1/2-1/2D))n-1}-\frac{1}{2} = 2^{\Omega(n/D^2)} = N_{2,n}^{\Omega(1/D^2)}.$$
For the second part of Theorem~\ref{thm:mainthm}, choosing $k\approx 1/(\eps \log(1/\eps))$ and noting that $\delta \log k = O(1)$, we obtain that the dimension must be at least
$$(2k)^n 2^{(-\delta\log k - 2)n - 1} - \frac{1}{2} =  N_{k,n}^{1-O(1/\log(1/\eps))}.$$

\begin{proof}[Proof of Theorem~\ref{thm:main}]
We start by considering the case that
for all $x_1,\ldots,x_n \in [2k]$, $f(x_1,\ldots,x_n)$ has non-negative coordinates and $\ell_1$-norm $1$.
We will later see how this implies the general case. Making this assumption allows us to think
of $f(x_1,\ldots,x_n)$ as a probability distribution over $[d]$.
Let $X=(X_1,\ldots,X_n)$ and $M$ be two random variables where $X$ is uniformly distributed over $[2k]^n$
and $M$ is distributed over $[d]$ according to $f(X)$. Using the chain rule for mutual information we obtain
$$ \log d \ge H(M) \ge I(X:M) = I(X_1:M) + I(X_2:M ~|~ X_1) + \cdots + I(X_n:M ~|~ X_1,\ldots,X_{n-1}).$$
The following lemma implies that for any $\ell \in [n]$,
$$ I(X_\ell:M ~|~ X_1,\ldots,X_{\ell-1}) \ge \log k - \delta \log (k-1) - H(\delta) $$
(this is true even conditioned on any fixed value of $X_1,\ldots,X_{\ell-1}$, and not just on average)
and therefore
$$ d \ge 2^{(\log k - \delta \log (k-1) - H(\delta))n }.$$

\begin{lemma}
Let $A$ and $B$ be two random variables such that $A$ is uniformly distributed over $[2k]$
and for any $a \in [2k]$, conditioned on $A=a$, $B$ is distributed according to some probability distribution $P_a$
on $[d]$. Assume that for all $r \in [k-1]$,
\begin{align*}
  \frac{1}{2k} \left\| \sum_{a=1}^r (P_a + P_{a+k}) - \sum_{a=r+1}^k (P_a + P_{a+k}) \right\|_1 \ge 1-\eps.
\end{align*}
Then $I(A:B) \ge \log k - \delta \log (k-1) - H(\delta)$.
\end{lemma}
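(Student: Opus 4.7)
The plan is to reduce the lemma to Claim~\ref{clm:fano} applied to the random variable $A \bmod k$, which is uniform over $[k]$. Since $A \bmod k$ is a function of $A$, the data processing inequality gives $I(A:B) \ge I(A \bmod k : B)$, so it suffices to exhibit a function $g$ of $B$ that predicts $A \bmod k$ with probability at least $1 - \delta = 1 - (k-1)\eps/2$.

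To build such a predictor, I would extract $k-1$ separate binary tests from the hypothesis, one for each threshold $r \in [k-1]$. The key observation is the dual characterization $\|v\|_1 = \max_{S \subseteq [d]} \bigl(\sum_{j \in S} v_j - \sum_{j \notin S} v_j\bigr)$. Applying this to $v = \sum_{a=1}^r (P_a + P_{a+k}) - \sum_{a=r+1}^k (P_a + P_{a+k})$ produces a subset $S_r \subseteq [d]$ that nearly achieves the assumed lower bound. Writing $Q_a := (P_a + P_{a+k})/2$ for the distribution of $B$ conditioned on $A \bmod k = a$, and setting $\alpha_a := Q_a(S_r)$, a short rearrangement shows
\begin{equation*}
\sum_{a=1}^{r} \alpha_a - \sum_{a=r+1}^{k} \alpha_a \;\ge\; r - \tfrac{k\eps}{2},
\end{equation*}
which is exactly the statement that the binary test ``guess $A \bmod k \le r$ iff $B \in S_r$'' succeeds with probability at least $1 - \eps/2$ when $A$ is uniform over $[2k]$ (equivalently, when $A \bmod k$ is uniform over $[k]$).

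Once all $k-1$ tests are in hand, define $g(B)$ to be the unique value in $[k]$ consistent with the outcomes of all tests (and anything, say $1$, if no such value exists). A union bound gives $\Pr[\text{some test errs}] \le (k-1)\cdot\eps/2 = \delta$, and when every test is correct, $g(B) = A \bmod k$. Hence $\Pr[g(B) = A \bmod k] \ge 1 - \delta$, and since $\delta < 1/2$ the hypothesis $p \ge 1/2$ of Claim~\ref{clm:fano} is satisfied with $p = 1-\delta$. Claim~\ref{clm:fano} then yields
\begin{equation*}
I(A \bmod k : B) \;\ge\; \log k - \delta \log(k-1) - H(1-\delta) \;=\; \log k - \delta \log(k-1) - H(\delta),
\end{equation*}
using $H(1-\delta) = H(\delta)$, and the lemma follows by the data processing step above.

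The main obstacle is step one: making sure that the $\ell_1$ bound on the signed sum in Eq.~\eqref{eq:constr} translates cleanly into ``the binary test succeeds with probability $1 - \eps/2$ on average over $A$ uniform''. The $1/(2k)$ normalization in the hypothesis is set up so that the worst case (two disjointly supported mixtures) corresponds to value $1$, and one has to track the $r$ vs.\ $k-r$ asymmetry carefully when converting $\|{\cdot}\|_1$ into a probability of correct classification. Once this bookkeeping is done, the union bound and Fano step are essentially automatic.
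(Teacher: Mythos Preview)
Your proposal is correct and shares the paper's overall arc: pass to $A' := A \bmod k$ via data processing, exhibit a predictor of $A'$ from $B$ with success probability at least $1-\delta$, and finish with Claim~\ref{clm:fano}. The difference lies in how the predictor is built and analyzed. The paper takes the maximum-likelihood rule $j \mapsto \arg\max_a Q_a(j)$ and lower bounds its success probability $\tfrac{1}{k}\|\max\{Q_1,\ldots,Q_k\}\|_1$ via a short coordinate-wise inequality (the auxiliary Claim inside the proof). You instead extract from each constraint a sign set $S_r$, interpret it as a binary threshold test ``$A' \le r$ iff $B \in S_r$'' with error at most $\eps/2$, and combine the $k-1$ tests by a union bound. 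Your route is slightly more elementary in that it avoids the auxiliary inequality and makes the ``$(k-1)$ tests, each erring with probability $\le \eps/2$'' picture explicit; the paper's route is a touch shorter and yields the optimal (MLE) predictor directly. The bookkeeping you flagged works out exactly: with $\alpha_a = Q_a(S_r)$ one gets $\sum_{a\le r}\alpha_a - \sum_{a>r}\alpha_a \ge r - k\eps/2$, which rearranges to $\Pr[\text{test }r\text{ correct}] \ge 1-\eps/2$, so the union bound gives success $\ge 1-\delta$ as claimed.
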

\begin{proof}
Let $A' = ((A-1)~{\mod}~k)+1$, and notice that $A'$ is uniformly distributed on $[k]$. By the data processing inequality, $I(A:B) \ge I(A':B)$.
For any $a \in [k]$, let $Q_a := (P_a + P_{a+k})/2$ be the distribution of $B$ conditioned on $A'=a$.
Our assumption says that for all $r \in [k-1]$,
$$
  \frac{1}{k} \left\| \sum_{a=1}^r Q_a - \sum_{a=r+1}^k Q_a \right\|_1 \ge 1-\eps.
$$
We need the following easy claim.
\begin{claim}
For any $p_1,\ldots,p_k \ge 0$,
$$\left(\sum_{i=1}^k p_i \right) - \max\{p_1,\ldots,p_k\} \le \frac{1}{2} \sum_{r=1}^{k-1} \left(\left(\sum_{i=1}^k p_i\right) - \left| \sum_{i=1}^r p_i - \sum_{i=r+1}^k p_i\right|\right).$$
\end{claim}
\begin{proof}
Let $r^* \in \{0,\ldots,k-1\}$ be the largest such that the expression inside the absolute value is negative.
Then the sum of the absolute values at $r=r^*$ and $r=r^*+1$ is exactly $2 p_{r^*+1}$. The claim follows.
\end{proof}
By applying the inequality to each of the $d$ coordinates of the probability distributions $Q_a$, and summing
the results, we obtain
$$
1 - \frac{1}{k} \|\max\{Q_1,\ldots,Q_k\}\|_1
   \le \frac{1}{2} \sum_{r=1}^{k-1} \left(1 - \frac{1}{k} \left\| \sum_{a=1}^r Q_a - \sum_{a=r+1}^k Q_a \right\|_1\right)
$$
and hence
$$ \frac{1}{k} \|\max\{Q_1,\ldots,Q_k\}\|_1 \ge 1 - (k-1)\eps/2 = 1-\delta. $$
Consider the function that maps each $j \in [d]$ to the $a \in [k]$ that maximizes $\Pr[Q_a = j]$.
This function correctly predicts $A'$ from $B$ with probability
$\frac{1}{k} \|\max\{Q_1,\ldots,Q_k\}\|_1$. The lemma now follows from Claim~\ref{clm:fano}.
\end{proof}

We now show how to derive a similar bound for any $f$ as in the statement of the
theorem. Let $f:[2k]^n \to \R^{d}$ be such that for all $x \in [2k]^n$,
$f(x)$ has $\ell_1$ norm at most 1. Define $g:[2k]^n \to \R^{2d+1}$
by the concatenation
$$g(x) := \max\{f(x),0\} ~.~ \max\{-f(x),0\} ~.~ 1 - \|f(x)\|_1.$$
Obviously, for all $x$, $g(x)$ is non-negative and has $\ell_1$ norm $1$.
Moreover, the linear operator that maps any $y \in \R^{2d+1}$ to the vector $(y_j-y_{j+d})_{j=1}^d \in \R^d$
cannot increase the $\ell_1$ norm and maps $g(x)$ to $f(x)$ for all $x$.
Therefore Eq.~\eqref{eq:constr} holds for $g$, and the theorem follows.
\end{proof}

\subsection*{Acknowledgments}

I thank the organizers of the workshop ``Metric embeddings, algorithms and hardness of approximation"
in the Institut Henri Poincar{\'e}, where this work started. I also thank Moses Charikar for the
inspiring talk he gave there, and Assaf Naor and Ofer Neiman for useful discussions.

\end{document}